\documentclass[a4paper,12pt]{amsart}

\usepackage{a4wide}
\newtheorem{prop}{Proposition}
\newtheorem{lemma}{Lemma}
\newtheorem{rem}{Remark}
\newtheorem{defi}{Definition}
\newtheorem{thm}{Theorem}
\newtheorem{coro}{Corollary}
\usepackage{amssymb}
\usepackage{amsmath}
\usepackage{bm}
\numberwithin{equation}{section}
\numberwithin{prop}{section}
\numberwithin{lemma}{section}
\numberwithin{rem}{section}
\numberwithin{thm}{section}
\numberwithin{defi}{section}
\numberwithin{coro}{section}

\begin{document}

\title{Weyl-Einstein structures on K-contact manifolds}
\author{Paul Gauduchon and Andrei Moroianu}
\address{Paul Gauduchon \\ CMLS\\ {\'E}cole
  Polytechnique \\ CNRS, Universit\'e Paris-Saclay, 91128 Palaiseau \\ France}
\email{paul.gauduchon@polytechnique.edu}

\address{Andrei Moroianu \\ Laboratoire de Math\'ematiques d'Orsay, Universit\'e Paris-Sud, CNRS, Universit\'e Paris-Saclay, 91405 Orsay, France}
\email{andrei.moroianu@math.cnrs.fr}
%\thanks{We thank the referee for a very careful reading of the first version of this manuscript which helped us to make several improvements and corrections.}

\begin{abstract}
We show that a compact K-contact manifold $(M,g,\xi)$ has a closed Weyl-Einstein connection compatible with the conformal structure $[g]$ if and only if it is Sasaki-Einstein.
\end{abstract}
\maketitle

%\tableofcontents

\def\d{\mathrm{d}}

\section{Introduction}

{\it $K$-contact} structures  --- see the definition in Section \ref{sKcontact} --- can be viewed as the odd-dimensional counterparts of {\it almost K\"ahler} structure, in the same way as {\it Sasakian} structures are the odd-dimensional counterparts of {\it K\"ahler structures}. It has been shown in \cite{bg}, cf. also \cite{adm}, that {\it compact} Einstein $K$-contact structures are actually Sasakian, hence Sasaki-Einstein. In this note, we consider the more general situation of a compact $K$-contact manifold $(M, g, \xi)$ carrying in addition a {\it Weyl-Einstein} connection $D$ compatible with the conformal class $[g]$,  already considered by a number of authors, in particular in \cite{ghosh} and \cite{ma}. We show --- Theorem \ref{main} and Corollary \ref{cor} below --- that $g$ is then Einstein and  $D$ is the Levi-Civita connection of an Einstein metric $g_0$ in the conformal class $[g]$, which is actually equal $g$ up to scaling, except if $(M, [g])$ is the flat conformal sphere. In all cases, the $K$-contact structure is Sasaki-Einstein.

\section{Conformal Killing vector fields} \label{sconf}

Let $(M, c)$ be a (positive definite) conformal manifold of dimension $n$. A vector field $\xi$ on $M$ is called {\it conformal Killing } with respect to $c$ if it preserves $c$, meaning that for any metric $g$ in $c$, the trace-free part 
$\left(\mathcal{L} _{\xi} g\right) _0$ of the Lie derivative $\mathcal{L} _{\xi} g$ of $g$ along $\xi$ is identically zero, hence that $\mathcal{L} _{\xi} g = f \, g$, for some function $f$, depending on $\xi$ and $g$, and it is then easily checked that $f = - \frac{2\delta ^g \eta _g}{n}$, where $\eta _g$ denotes the $1$-form dual to $\xi$ and $\delta ^g \eta _g$ the co-differential of $\eta_g$ with respect to $g$. In particular, a conformal Killing vector field $\xi$ on $M$ is Killing with respect to some metric $g$ in $c$ if and only if $\delta ^g \eta _g = 0$. In this section, we  present a number of facts  concerning conformal Killing vector fields for further use in this note. 

\begin{prop} \label{l1}
Let $(M,g)$ be a connected compact oriented Riemannian manifold of dimension $n$, $n \geq  2$, carrying a non-trivial parallel vector field $T$. Let $\xi$ be any conformal Killing vector field on $M$ with respect to the conformal class $[g]$ of $g$. Then, $\xi$ is Killing with respect to $g$; moreover,  it commutes 
with $T$ and the inner product $a:= g (\xi, T)$ is constant.
\end{prop}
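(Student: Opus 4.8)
My plan is to use the decomposition of $TM$ induced by the parallel vector field $T$, together with integration over the compact manifold $M$, to control the conformal factor. First, observe that since $T$ is parallel, $|T|$ is a positive constant (rescale so $|T|=1$), the metric $g$ splits locally as a Riemannian product with a flat line factor generated by $T$, and in particular the $1$-form $\theta$ dual to $T$ is parallel, hence harmonic. Now let $\xi$ be conformal Killing, so $\mathcal L_\xi g = f\,g$ with $f = -\tfrac{2}{n}\delta^g\eta$, where $\eta$ is dual to $\xi$. The key computation is to evaluate $\mathcal L_\xi$ on the parallel tensor $\theta$: since $\mathcal L_\xi$ commutes with $\d$, and using Cartan's formula $\mathcal L_\xi\theta = \d(\theta(\xi)) + \iota_\xi\d\theta = \d a$ because $\d\theta = 0$ and $a = \theta(\xi) = g(\xi,T)$. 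On the other hand, differentiating the relation $|T|^2 = 1$ paired against the conformal change, and more directly contracting $\mathcal L_\xi g = f g$ with $T\otimes T$, one gets $2g(\nabla_T\xi, T) = f$; combined with $\nabla T = 0$ this should be massaged into an expression relating $f$ and derivatives of $a$ along $T$.

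The heart of the argument is then an integral (Bochner-type) identity forcing $f\equiv 0$. I would compute $\delta^g(\text{something built from }\xi\text{ and }T)$ and integrate over $M$; the natural candidate is $\delta^g(a\,\eta)$ or $\delta^g(f\,\theta)$. Using $\delta^g\theta = 0$ (as $\theta$ is co-closed) we get $\delta^g(a\theta) = -g(\d a, T) = -T(a)$, and integrating gives $\int_M T(a)\,\mathrm{vol}_g = 0$; but $T(a) = g(\mathcal L_\xi g)(\cdot,\cdot)$-type terms... more precisely, from $\mathcal L_\xi g = f g$ one finds $T(a) = g(\nabla_T\xi,T) + g(\nabla_T T,\xi)$, and the second term vanishes, while the first equals $\tfrac12 f$ by the contraction above; hence $\int_M f\,\mathrm{vol}_g = 0$. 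Since on a compact manifold a conformal Killing field satisfies $\int_M f\,\mathrm{vol}_g = 0$ already for trivial reasons, I need a sharper input: I would instead contract $\mathcal L_\xi g = fg$ twice with $T$ in a way that produces $T(a) = f$ (not $\tfrac12 f$) — being careful with the precise coefficients — or alternatively apply $\mathcal L_\xi$ to the volume form and to $\theta$ simultaneously. The cleanest route: since $\theta$ is parallel, $\mathcal L_\xi\theta = \nabla_\xi\theta + \theta\circ\nabla\xi = \theta\circ\nabla\xi$, and pairing with $T$ gives $(\mathcal L_\xi\theta)(T) = \theta(\nabla_T\xi) = g(\nabla_T\xi,T) = \tfrac12(\mathcal L_\xi g)(T,T) = \tfrac12 f$. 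But also $\mathcal L_\xi\theta = \d a$, so $T(a) = \tfrac12 f$. Thus $a$ is determined along $T$-lines by $f$.

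To finish, I would use that $T$ generates a (local) flow of isometries and that $M$ is compact: decompose $\xi = a\,T + \xi'$ with $\xi'\perp T$. Apply $\mathcal L_T$ to the equation $\mathcal L_\xi g = fg$; since $\mathcal L_T g = 0$, we get $\mathcal L_{[T,\xi]}g = T(f)\,g$, so $[T,\xi]$ is again conformal Killing with conformal factor $T(f)$. Now the Bochner/Obata-type rigidity for conformal Killing fields on compact manifolds, applied to the component along the parallel direction, should force $f = 0$: indeed the equation $T(a) = \tfrac12 f$ together with $\int_M f = 0$ and a maximum-principle argument on the function $a$ (which is bounded on compact $M$, attains its max and min, where $T(a) = 0$, hence $f = 0$ there) — combined with the fact that the zero set of $f$ together with the sign structure of $\delta^g \d a$ — pins down $f \equiv 0$. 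Once $f\equiv 0$, $\xi$ is Killing for $g$; then $\mathcal L_\xi\theta = \d a$ and $\mathcal L_\xi$ of the parallel $\theta$ being a Killing-field Lie derivative of a parallel form is itself parallel (Killing fields preserve the holonomy decomposition), so $\d a$ is parallel; a parallel exact $1$-form on a compact manifold vanishes, giving $a$ constant. Finally, $[T,\xi] = \nabla_\xi T - \nabla_T\xi = -\nabla_T\xi$, and $\mathcal L_\xi g = 0$ with $a$ constant forces $\nabla_T \xi \perp T$ and of vanishing norm by the same parallel-transport rigidity, so $[T,\xi] = 0$.

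I expect the main obstacle to be the rigidity step showing $f\equiv 0$: getting the constant right in $T(a) = \tfrac12 f$ and then arguing that $a$ harmonic-plus-compact forces $f=0$. The slick way is probably: $a$ attains a maximum, at which point $\d a = 0$ hence $f = 2T(a) = 0$ at that point, but to conclude $f\equiv 0$ globally one needs that $\Delta a$ has a sign or that $f$ has a sign, which comes from a secondary Bochner identity — so the real work is assembling the correct integral formula, and I would look to the structure of \cite{adm} or the standard compact conformal Killing field theory for the exact identity.
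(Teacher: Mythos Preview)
Your setup is correct and you recover the paper's first key relation: with $f=-\tfrac{2}{n}\delta\eta$, your computation $T(a)=\tfrac12 f$ is exactly the paper's identity $\delta\eta=-n\,\d a(T)$. The endgame is also essentially right: once $\xi$ is Killing, your argument that $\mathcal L_\xi\theta=\d a$ is parallel (because Killing fields preserve $\nabla$) and hence zero on a compact manifold is valid, and then $g(\nabla_X\xi,T)=\d a(X)=0$ together with skew-symmetry of $\nabla\xi$ gives $\nabla_T\xi=0=[T,\xi]$.

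The genuine gap is exactly where you flag it: you never obtain the second differential identity needed to force $f\equiv 0$, and none of your proposed closures (maximum principle on $a$, sign of $\Delta a$, ``Bochner-type'' hand-waving) actually works as stated. Knowing $f=0$ at extrema of $a$ says nothing globally, and $\int_M f=0$ is automatic, so no contradiction arises from your relations alone. What the paper does is upgrade the scalar relation $T(a)=-\tfrac1n\delta\eta$ to the full $1$-form identity
\[
\nabla_T\eta \;=\; -\,\d a \;-\; \tfrac{2\delta\eta}{n}\,\theta,
\]
obtained by writing $\nabla\xi=A-\tfrac{\delta\eta}{n}\,\mathrm{Id}$ with $A$ skew and reading off $\d a(X)=g(\nabla_X\xi,T)=-g(\nabla_T\xi,X)-\tfrac{2\delta\eta}{n}\,\theta(X)$. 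Taking $\delta$ of this (using $\delta(\nabla_T\eta)=\mathcal L_T(\delta\eta)=T(\delta\eta)$ and $\delta\theta=0$) yields the missing second-order relation
\[
\Delta a \;=\; -\,\tfrac{n-2}{n}\,T(\delta\eta),
\]
and then one integration by parts against $a$, together with $T(a)=-\tfrac1n\delta\eta$, gives
\[
\int_M |\d a|^2\,v_g \;=\; -\,\tfrac{n-2}{n^2}\int_M(\delta\eta)^2\,v_g.
\]
Both sides are forced to vanish (for $n=2$ one reads $\delta\eta=0$ directly from $\delta\eta=-n\,T(a)$ once $\d a=0$). This single integral identity is the ``correct integral formula'' you were looking for; without it the argument does not close.
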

\begin{proof} Denote by $\eta = \xi ^{\flat}$ the $1$-form dual to $\xi$ and by $\delta \eta$ the co-differential of $\eta$ with respect to $g$;  then, $\xi$ is Killing if and only if $\delta \eta = 0$.  Denote by $\nabla$  the Levi-Civita connection of $g$ and by $\mathcal{L} _T$ the Lie derivative along $T$;   then,  $\nabla _T \xi = [T, \xi] = \mathcal{L} _T \xi$ is conformal Killing, and we have:
\begin{equation} \label{deltanablaTxi} \delta \big(\nabla _T \eta \big) = \delta (\mathcal{L} _T \eta) = \mathcal{L} _T (\delta \eta) = T (\delta \eta). 
\end{equation}
Since $T$ is non-trivial, we may assume  $|T| \equiv 1$. Denote  
 $a = g (\xi, T) = \eta (T)$. Since $\xi$ is conformal Killing, 
$\nabla \xi = A - \frac{\delta \eta}{n} \, {\rm Id}$, where $A$ is skew-symmetric and ${\rm Id}$ denotes the identity; for any vector field $X$ we then have: 
${\rm d} a (X) = g (\nabla _X \xi, T) = - g (\nabla _T \xi, X) - \frac{2 \delta \eta}{n} \, g(X, T)$. We thus get:
\begin{equation}  \label{nablaTxi} \nabla _T \eta = - {\rm d} a - \frac{2 \delta \eta}{n} \, \theta, 
\end{equation}
where $\theta = T ^{\flat}$ denotes the $1$-form dual to $T$. By evaluating both members of (\ref{nablaTxi}) on $T$, we get:
\begin{equation} \label{deltaeta} \delta \eta = - n \, {\rm d} a (T), \end{equation}
whereas,\, by considering their co-differential and by using (\ref{deltanablaTxi}), we get:
\begin{equation} \label{Deltaa} \Delta a = - \frac{(n - 2)}{n} \, T (\delta \eta), \end{equation}
where $\Delta a = \delta {\rm d} a$ denotes the Laplacian of $a$. Denote by $v _g$ the volume form determined by $g$ and the chosen orientation; from (\ref{deltaeta}) and (\ref{Deltaa}), we then infer:
\begin{equation*}\int _M a \, \Delta a \, v _g  =  - \frac{(n - 2)}{n} \, \int _M aT(\delta \eta)\, v _g = \frac{(n - 2)}{n} \, \int _M {\rm d} a (T) \, \delta \eta \, v _g = - \frac{(n - 2)}{n^2} \, \int _M (\delta \eta) ^2 \, v _g, \end{equation*}
hence
\begin{equation} \label{intdadeltaxi} \int _M |{\rm d} a| ^2 \, v _g = \int _M a \, \Delta a \, v _g= - \frac{(n - 2)}{n^2} \, \int _M (\delta \eta) ^2 \, v _g. \end{equation}
This readily implies that ${\rm d} a = 0$ and, either by (\ref{intdadeltaxi}) if $n > 2$ or by (\ref{deltaeta}) if $n = 2$, that $\delta \eta = 0$, i.e. that $\xi$ is Killing. Finally, by (\ref{nablaTxi}) we infer that $\nabla _T \xi = [T, \xi] = 0$. 
\end{proof} 
\begin{rem} {\rm Proposition  \ref{l1} can be viewed as a particular case of a more general statement (Theorem 2.1 in \cite{ms}) concerning conformal Killing forms on Riemannian products.
} \end{rem}

The following well-known Proposition \ref{prop-obata} was first established by T. Nagano in \cite{n} and T. Nagano--K. Yano in \cite{ny} in the more general setting of complete Einstein manifolds. The sketch of proof given here for the convenience of the reader follows M. Obata's treatment in  \cite{obata62}, cf. also \cite{obata} 
for a more general discussion. 
\begin{prop}  \label{prop-obata}
Assume that $(M^n,g)$ is a compact oriented Einstein manifold carrying a conformal Killing vector field which is not Killing. Then $(M,g)$ is, up to constant rescaling, isometric to the round sphere $\mathbb{S}^n$.
\end{prop}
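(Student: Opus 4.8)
The plan is to follow Obata's classical argument adapted to the Einstein setting. Write the Einstein condition as $\mathrm{Ric}^g = \frac{S}{n}\,g$ with $S$ constant, and let $\xi$ be a conformal Killing vector field with dual $1$-form $\eta=\xi^\flat$ that is not Killing, so $f:=-\frac{2\delta\eta}{n}$ is a non-constant function (if $f$ were constant, integrating $\mathcal{L}_\xi g = fg$ over the compact $M$ forces $f=0$, contradicting non-Killing). Normalize $g$ by constant rescaling. The goal is to produce the Obata equation $\nabla^2\varphi = -c\,\varphi\,g$ for a suitable non-constant function $\varphi$ and constant $c>0$, which by Obata's theorem forces $(M,g)$ to be the round sphere.

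First I would set up the Weitzenböck-type identities for the conformal Killing field. From $\nabla\xi = A - \frac{\delta\eta}{n}\mathrm{Id}$ with $A$ skew, one differentiates once more and uses the Einstein condition to get a second-order equation relating $\nabla^2 f$, $f$, and $S$. Concretely, the standard computation (contracting the conformal Killing operator identity with Ric) yields $\nabla^2 f = -\frac{S}{n(n-1)}\, f\, g$ up to an additive term that integrates away; the cleanest route is: apply $\delta\d$ to the function $f$ and use that $\xi^\flat$ satisfies $\Delta\eta = \frac{2S}{n}\eta + \text{(exact)}$, then project onto the exact part. I would then argue that the constant $S$ must be strictly positive: integrating $\int_M f\,\Delta f\, v_g = \frac{S}{n-1}\int_M f^2\, v_g$ (after normalizing $\int_M f\,v_g=0$, which holds since $f=-\frac{2}{n}\delta\eta$ is a codifferential) shows the left side is $\int_M|\d f|^2 v_g>0$, hence $S>0$.

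With $S>0$, rescale so that $\frac{S}{n(n-1)}=1$, set $\varphi:=f$ (after subtracting its mean, which is already zero), and conclude $\nabla^2\varphi = -\varphi\, g$ with $\varphi$ non-constant. Obata's theorem (\cite{obata62}) then gives that $(M,g)$ is isometric to the unit round sphere $\mathbb{S}^n$. Tracking the rescalings back, $(M,g)$ is homothetic to the round sphere, which is the claim.

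The main obstacle I expect is the algebraic manipulation establishing that the trace-free Hessian of $f$ vanishes, i.e. deriving $\nabla^2 f + \frac{S}{n(n-1)}f\,g = 0$ rather than merely a relation valid after integration. This requires carefully combining the Ricci identity $\nabla_X\nabla_Y\xi - \nabla_Y\nabla_X\xi - \nabla_{[X,Y]}\xi = R(X,Y)\xi$ with the conformal Killing equation and the Einstein hypothesis, and then showing the leftover terms form a closed $1$-form that is also coclosed, hence zero on a compact manifold. Everything else — the positivity of $S$, the sign normalization, and the invocation of Obata's rigidity theorem — is then routine.
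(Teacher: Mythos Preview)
Your strategy is correct and lands in the same place as the paper: produce a non-constant function (namely $f=-\tfrac{2}{n}\delta\eta$) satisfying the Obata equation $\nabla^2 f+\tfrac{\mathrm{Scal}}{n(n-1)}\,f\,g=0$, check $\mathrm{Scal}>0$, and invoke Obata's rigidity theorem. The paper reaches the same Obata equation by a slightly different intermediate: from the Bochner identity it first shows $\Delta(\delta\eta)=\tfrac{\mathrm{Scal}}{n-1}\,\delta\eta$, so $\delta\eta$ realises Lichnerowicz's lower bound $\lambda_1\ge\tfrac{\mathrm{Scal}}{n-1}$; the equality case of that bound (proved as a separate lemma by an integral argument) then forces $\mathrm{grad}_g(\delta\eta)$ to be conformal Killing, which is precisely the vanishing of the trace-free Hessian of $\delta\eta$. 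In short, the paper packages the pointwise Hessian identity you are after into the equality case of an integral inequality, whereas you propose a direct tensor computation.

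One caution on your outline: the step ``the leftover terms form a closed and coclosed $1$-form, hence zero on a compact manifold'' is not valid as written---harmonic $1$-forms need not vanish. Here it can be rescued because once you know $\mathrm{Scal}>0$, Bochner gives $b_1(M)=0$; but it is cleaner to avoid any such residue entirely. Compute $\mathcal{L}_\xi\mathrm{Ric}$ two ways: since $\mathrm{Ric}=\tfrac{\mathrm{Scal}}{n}g$ with $\mathrm{Scal}$ constant, one has $\mathcal{L}_\xi\mathrm{Ric}=\tfrac{\mathrm{Scal}}{n}\,f\,g$, while the linearised conformal-change formula for Ricci (the formula the paper quotes from Besse, with the quadratic terms dropped) gives $\mathcal{L}_\xi\mathrm{Ric}=-\tfrac{n-2}{2}\nabla^2 f+(\text{scalar})\,g$. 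Equating trace-free parts kills $(\nabla^2 f)_0$ immediately for $n\ge 3$, and the trace recovers $\Delta f=\tfrac{\mathrm{Scal}}{n-1}f$, so the Obata equation drops out pointwise with nothing left over.
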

\begin{proof}
We first recall the following lemma, due to A. Lichnerowicz
\cite[\textsection 85]{lichne}, cf. also Theorems 3 and 4 in \cite{obata62}. 
\begin{lemma} \label{lambda} Let $(M, g)$ be a connected compact Einstein manifold of dimension $n \geq 2$ of positive scalar curvature ${\rm Scal}$ $($recall that $\mathrm{Scal}$ is automatically constant for $n\ge 3$ and constant by convention for $n=2)$. Denote by $\lambda _1$ the smallest positive eigenvalue of the Riemannian Laplacian acting on functions. Then, 
\begin{equation} \lambda _1 \geq \frac{{\rm Scal}}{(n - 1)}, \end{equation}
with equality if and only if ${\rm grad} _g f$, the gradient of $f$ with respect to $g$,  is a conformal Killing vector field for each function $f$ in the eigenspace of $\lambda _1$. 
\end{lemma}
\begin{proof} As before denote by $\nabla$ the Levi-Civita connection of the metric $g$ and denote by ${\rm Ric}$ the Ricci tensor of $g$.   For any 1-form $\eta$ on $M$, denote by $\xi:= \eta ^{\sharp}$ the vector field dual to $\eta$ with respect to $g$. The covariant derivative $\nabla \eta$ of $\eta$ then splits as follows:
\begin{equation} \label{dec} \nabla \eta = \frac{1}{2} \left(\mathcal{L} _{\xi} g\right) _0 + \frac{1}{2} {\rm d} \eta - \frac{\delta \eta}{n} \, g, \end{equation}
where $\left(\mathcal{L} _{\xi} g\right) _0$ denotes the trace-free part of $\mathcal{L} _{\xi} g$. By using (\ref{dec}) the {\it Bochner identity} 
\begin{equation} \label{bochner} \Delta \eta = \delta \nabla \eta + {\rm Ric} (\xi) \end{equation}
can be rewritten as
\begin{equation} \label{bochner-univ} {\rm Ric} (\xi) = - \frac{1}{2} \delta \left(\mathcal{L} _{\xi} g\right) _0 + \frac{(n - 1)}{n} \, {\rm d} \delta \eta + \frac{1}{2} \delta {\rm d} \eta. \end{equation}
Let $\lambda$ be any positive eigenvalue of $\Delta$ and $f$ any non-zero element of the corresponding eigenspace, so that $\Delta f = \lambda \, f$. By choosing $\eta := {\rm d} f$, so that $\xi = {\rm grad} _g f$,  and substituting ${\rm Ric} = \frac{\rm Scal}{n} \, g$ in (\ref{bochner-univ}), we get
\begin{equation} \lambda \, {\rm d} f =\Delta {\rm d} f 
=  \frac{\rm Scal}{(n - 1)} \, {\rm d} f + \frac{n}{2(n - 1)} \, \delta \left(\mathcal{L} _\xi g\right)_0. \end{equation}
By contracting  with ${\rm d} f$ and  integrating over $M$, we obtain
\begin{equation} \left(\lambda - \frac{\rm Scal}{(n - 1)}\right) \int _M |{\rm d} f| ^2 \, v _g = \frac{n}{4 (n - 1)} \int _M |\left(\mathcal{L} _\xi  g\right)_0| ^2 \, v _g \geq 0, \end{equation}
so that $\lambda \geq \frac{\rm Scal}{(n - 1)}$, with equality if and only if $\left(\mathcal{L} _\xi g\right)_0 = 0$, hence if and only if  $\xi={\rm grad} _g f$ is conformal Killing. 
\end{proof}
The proof of Proposition \ref{prop-obata} goes as follows. First observe that we may assume ${\rm Scal} > 0$, as any conformal Killing vector field is zero if ${\rm Scal} < 0$ or parallel, hence Killing, if ${\rm Scal} = 0$. Let $\xi$ be any conformal Killing vector field on $M$, with dual $1$-form $\eta$. From (\ref{bochner-univ}), we get:
\begin{equation} \label{crux} {\rm Ric} (\xi) =\frac{\rm Scal}{n} \, \eta = \frac{(n - 1)}{n} \, {\rm d} \delta \eta + \frac{1}{2} \delta {\rm d} \eta, \end{equation}
hence
\begin{equation} \Delta (\delta \eta) = \frac{\rm Scal}{(n - 1)} \, \delta \eta. \end{equation}
From Lemma \ref{lambda}, we then infer  that ${\rm grad} _g (\delta \eta)$ is conformal Killing. By Theorem 5 in \cite{obata}, this implies that $\delta \eta$ is constant, hence identically zero, unless $(M, g)$ is isometric to the standard sphere $\mathbb{S} ^n$. If $(M, g) \neq  \mathbb{S} ^n$,  we then have $\delta \eta = 0$, meaning that $\xi$ is Killing. 
\end{proof}

\section{Weyl-Einstein connections on K-contact manifolds} \label{sKcontact} 

\begin{defi} A K-contact manifold is an oriented  Riemannian manifold $(M,g)$ of odd dimension $n = 2 m + 1$, endowed with a unit Killing vector field $\xi$ whose covariant derivative $\varphi:=\nabla\xi$ satisfies 
\begin{equation}\label{phi2}\varphi^2=-\mathrm{Id}+\eta\otimes\xi,
\end{equation} 
where $\eta$ is the metric dual $1$-form of $\xi$.
\end{defi}

Since $\xi$ is Killing, we have $\d\eta(X,Y)=2g(\varphi(X),Y)$ for all vector fields $X$ and $Y$. The kernel of the 2-form $\d \eta$, equal to that of $\varphi$, is then spanned by $\xi$:
\begin{equation}\label{ker}
\mathrm{ker}(\d\eta)=\mathrm{ker}(\varphi)=\mathbb{R}\xi.
\end{equation}
It follows that the restriction of ${\rm d} \eta$ to $\mathcal{D} := {\rm ker} (\eta)$ is non-degenerate, hence that $\mathcal{D}$ is  a contact distribution  on $M$. Moreover, since $\eta (\xi) = 1$ and $\xi \lrcorner {\rm d} \eta = 0$, $\xi$ is the Reeb vector field of the contact $1$-form $\eta$.

Denote by ${\rm R}$ the Riemannian curvature tensor defined by $\mathrm{R}_{X,Y}:=\nabla_{[X,Y]}-[\nabla_X,\nabla_Y]$. From (\ref{phi2}) we easily infer:
\begin{lemma} \label{Rxi} For any $K$-contact structure, we have:
\begin{equation} \label{R} {\rm R} _{\xi, X} \xi = X - g (\xi, X) \xi, \end{equation}
for any vector field $X$.
\end{lemma}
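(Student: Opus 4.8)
The plan is to compute $\mathrm{R}_{\xi,X}\xi$ directly from the definition $\mathrm{R}_{X,Y}=\nabla_{[X,Y]}-[\nabla_X,\nabla_Y]$, using only that $\xi$ is a unit Killing field with $\nabla\xi=\varphi$ and that $\varphi^2=-\mathrm{Id}+\eta\otimes\xi$.

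First I would record two elementary facts. Since $\xi$ is Killing of constant length $1$, for every vector field $Z$ one has $g(\nabla_\xi\xi,Z)=-g(\nabla_Z\xi,\xi)=-\tfrac12 Z(|\xi|^2)=0$, hence $\nabla_\xi\xi=0$, equivalently $\varphi(\xi)=0$. Next I claim $\nabla_\xi\varphi=0$: the local flow of $\xi$ consists of isometries fixing $\xi$, hence it preserves $\varphi=\nabla\xi$, so $\mathcal{L}_\xi\varphi=0$; on the other hand, expanding $\mathcal{L}_\xi X=\nabla_\xi X-\nabla_X\xi$ and using $\nabla_\bullet\xi=\varphi$ together with $\nabla_{\varphi X}\xi=\varphi^2 X$ gives $(\mathcal{L}_\xi\varphi)(X)=(\nabla_\xi\varphi)(X)+\varphi(\nabla_\xi X)-\varphi^2 X-\varphi(\nabla_\xi X)+\varphi^2 X=(\nabla_\xi\varphi)(X)$, so indeed $\nabla_\xi\varphi=0$.

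Then I would expand
\[
\mathrm{R}_{\xi,X}\xi=\nabla_{[\xi,X]}\xi-\nabla_\xi\nabla_X\xi+\nabla_X\nabla_\xi\xi .
\]
The last term vanishes because $\nabla_\xi\xi=0$. Writing $[\xi,X]=\nabla_\xi X-\nabla_X\xi$ and applying $\nabla_\bullet\xi=\varphi$ everywhere, the first term equals $\varphi(\nabla_\xi X)-\varphi^2 X$ and the middle term equals $(\nabla_\xi\varphi)(X)+\varphi(\nabla_\xi X)$. The two $\varphi(\nabla_\xi X)$ contributions cancel and $(\nabla_\xi\varphi)(X)=0$, so $\mathrm{R}_{\xi,X}\xi=-\varphi^2 X=X-\eta(X)\xi=X-g(\xi,X)\xi$, which is the assertion.

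The only step requiring genuine care is the identity $\nabla_\xi\varphi=0$; everything else is bookkeeping, where one must keep track of the (non-standard) sign in the curvature convention. Alternatively one could invoke the general second-order identity $\nabla^2_{X,Y}\xi=\pm\mathrm{R}_{\cdot,\cdot}\xi$ valid for any Killing field and substitute $X=Y=\xi$ in a suitable slot, but the Lie-derivative argument above is self-contained and just as short.
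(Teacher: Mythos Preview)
Your argument is correct. The paper, however, takes a slightly shorter route: it quotes the Kostant identity $\nabla_X(\nabla\xi)=\mathrm{R}_{\xi,X}$ for Killing fields, so that $\mathrm{R}_{\xi,X}\xi=(\nabla_X\varphi)(\xi)=\nabla_X(\varphi\xi)-\varphi(\nabla_X\xi)=-\varphi^2(X)$, using only $\varphi(\xi)=\nabla_\xi\xi=0$. In particular the paper never needs the auxiliary fact $\nabla_\xi\varphi=0$; that identity appears in the paper only later, and is itself deduced from Kostant. Your direct expansion of $\mathrm{R}_{\xi,X}\xi$ from the curvature definition trades the citation of Kostant for the Lie-derivative argument establishing $\nabla_\xi\varphi=0$; this is the alternative you yourself mention at the end, and it is exactly what the paper does. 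Both routes reduce immediately to $-\varphi^2(X)$, so the difference is purely in packaging: the Kostant formula absorbs the bookkeeping you carried out by hand.
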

\begin{proof} We first recall the general {\it Kostant formula}:
\begin{equation} \label{kostant-gen} \nabla _X (\nabla \xi) = {\rm R} _{\xi, X}, \end{equation}
for any vector field $X$ and any Killing vector field $\xi$, on any Riemannian manifold, cf. \cite{kostant}. In the current situation,  we thus have
\begin{equation} \label{kostant} \nabla _X \varphi = {\rm R} _{\xi, X}, \end{equation}
for any vector field $X$. Since $\xi$ is of norm $1$, we infer: ${\rm R} _{\xi, X} \xi = \nabla _X (\nabla _{\xi} \xi) - \nabla _{\nabla _X \xi} \xi = 
- \nabla _{\nabla _X \xi} \xi = - \varphi ^2 (X) = X - g (\xi, X) \, \xi$.
\end{proof}
\begin{rem} \label{rem-sasaki} {\rm A $K$-contact structure $(g, \xi)$ is called a  {\it Sasaki structure} if
  \begin{equation} \label{sasaki} (\nabla _X \varphi) (Y)  = \eta (Y) X - g (X, Y) \xi, \end{equation}
   for any vector fields $X, Y$, or, equivalently in view of (\ref{kostant}), if
   \begin{equation} \label{sasaki-R} {\rm R} _{\xi, X} =
     \xi \wedge X, \end{equation}
  (where the curvature ${\rm R}$ is viewed as a map from $\Lambda ^2 {\rm T}M$ to itself). } \end{rem}

\begin{lemma}[cf. \cite{blair}]\label{ric}
  Viewed as enndomorphism of the tangent bundle via the metric $g$,   the Ricci tensor of any  K-contact manifold satisfies
  \begin{equation} \label{blair} \mathrm{Ric}(\xi) = 2 m  \, \xi. \end{equation}
\end{lemma}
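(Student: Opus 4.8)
The plan is to compute the Ricci curvature of $g$ in the direction of $\xi$ by taking a trace of the curvature endomorphism, using the Kostant formula (\ref{kostant}) and the algebraic identity (\ref{phi2}) for $\varphi$. Concretely, I would use that for a Killing vector field the Ricci tensor satisfies $\mathrm{Ric}(\xi) = -\sum_i \mathrm{R}_{e_i,\xi}e_i$ (with our sign convention for $\mathrm{R}$), evaluated in a local orthonormal frame $\{e_i\}$, and then rewrite $\mathrm{R}_{e_i,\xi}e_i$ using (\ref{kostant}) as $-(\nabla_{e_i}\varphi)(e_i)$, so that $\mathrm{Ric}(\xi) = \sum_i (\nabla_{e_i}\varphi)(e_i) = (\delta^\nabla\varphi)$ up to sign.

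The key step is then to identify this divergence of $\varphi$. Since $\varphi = \nabla\xi$ is skew-symmetric and $\d\eta(X,Y) = 2g(\varphi X, Y)$, one has $\delta\d\eta = -2\sum_i(\nabla_{e_i}\varphi)(e_i)$ up to sign, and $\delta\d\eta = \Delta\eta - \d\delta\eta = \Delta\eta$ because $\delta\eta = 0$ ($\xi$ being Killing). By the Bochner formula (\ref{bochner}), $\Delta\eta = \delta\nabla\eta + \mathrm{Ric}(\xi) = \mathrm{Ric}(\xi)$ as well, which only gives a tautology; so instead I would compute $\sum_i(\nabla_{e_i}\varphi)(e_i)$ directly from (\ref{phi2}). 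Differentiating $\varphi^2 = -\mathrm{Id} + \eta\otimes\xi$ and tracing is one route; a cleaner one is to use Lemma \ref{Rxi}: contract the identity $\mathrm{R}_{\xi,X}\xi = X - g(\xi,X)\xi$ appropriately. Taking $X = e_i$ in (\ref{R}) and summing $\sum_i g(\mathrm{R}_{\xi,e_i}\xi, e_i)$ gives $\sum_i\big(g(e_i,e_i) - g(\xi,e_i)^2\big) = n - 1 = 2m$. On the other hand, by the symmetries of the curvature tensor, $\sum_i g(\mathrm{R}_{\xi,e_i}\xi,e_i) = \sum_i g(\mathrm{R}_{e_i,\xi}e_i,\xi) = \mathrm{Ric}(\xi,\xi)$, so $g(\mathrm{Ric}(\xi),\xi) = 2m$.

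To upgrade this from $g(\mathrm{Ric}(\xi),\xi) = 2m$ to the full statement $\mathrm{Ric}(\xi) = 2m\,\xi$, I would show that $\mathrm{Ric}(\xi)$ is proportional to $\xi$, i.e. that $g(\mathrm{Ric}(\xi), X) = 0$ for all $X \perp \xi$. This follows by computing $\mathrm{Ric}(\xi, X) = \sum_i g(\mathrm{R}_{e_i,\xi}X, e_i)$; using (\ref{kostant}) one gets $\mathrm{R}_{\xi,e_i}X = (\nabla_{e_i}\varphi)(X)$ only after care with arguments, so it is a little cleaner to invoke that for any Killing field $\mathrm{Ric}(\xi) = \delta(\nabla\xi) = \delta\varphi$ and note $g(\delta\varphi, X) = \sum_i(\nabla_{e_i}\varphi)(X, e_i)$; combining the Kostant formula with the first Bianchi identity and (\ref{R}) pins down all components. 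Alternatively, and most efficiently, one can differentiate $\varphi(\xi) = 0$: from $\nabla_X(\varphi\xi) = 0$ we get $(\nabla_X\varphi)(\xi) = -\varphi(\nabla_X\xi) = -\varphi^2(X) = X - \eta(X)\xi$, and then $\mathrm{Ric}(\xi) = \sum_i(\nabla_{e_i}\varphi)(e_i)$ together with skew-symmetry of each $\nabla_{e_i}\varphi$ gives $g(\mathrm{Ric}(\xi), X) = \sum_i g((\nabla_{e_i}\varphi)(e_i), X) = -\sum_i g(e_i, (\nabla_{e_i}\varphi)(X))$, which after symmetrization and use of the Bianchi identity reduces to the already-known contraction. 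The main obstacle is purely bookkeeping: keeping track of the sign conventions (the paper's unusual $\mathrm{R}_{X,Y} = \nabla_{[X,Y]} - [\nabla_X,\nabla_Y]$) and making sure the trace identities for Killing fields are applied with the correct signs; there is no conceptual difficulty, and the result is classical (Blair).
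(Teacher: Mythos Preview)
Your computation of $\mathrm{Ric}(\xi,\xi)=2m$ via Lemma~\ref{Rxi} is correct and is exactly what the paper does (equation~\eqref{ricxixi}). The identification $\mathrm{Ric}(\xi)=\delta\varphi$ is also the paper's starting point (equation~\eqref{deltaphi}).

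The gap is in the ``upgrade'' from $\mathrm{Ric}(\xi,\xi)=2m$ to $\mathrm{Ric}(\xi)=2m\,\xi$. You offer several routes---Bianchi plus Kostant, differentiating $\varphi(\xi)=0$, symmetrization---but none is actually carried out, and the phrases ``pins down all components'' and ``reduces to the already-known contraction'' are not arguments. In particular, the identity $(\nabla_X\varphi)(\xi)=X-\eta(X)\xi$ that you derive (correctly) only controls $g\big((\nabla_X\varphi)Y,\xi\big)$; it says nothing directly about $g(\delta\varphi,X)$ for $X\perp\xi$. Likewise, combining closedness of $\d\eta$ (i.e.\ the cyclic identity for $\nabla\varphi$) with skew-symmetry of $\nabla_X\varphi$ only reproduces tautologies of the form $g(\delta\varphi,X)=g(\delta\varphi,X)$; it does not force the transverse components to vanish. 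So as written, the proof is incomplete.

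The paper closes this gap with a specific idea you do not mention: rather than showing $\mathrm{Ric}(\xi)\perp\mathcal{D}$ directly, it shows $\varphi\big(\mathrm{Ric}(\xi)\big)=0$, which by \eqref{ker} forces $\mathrm{Ric}(\xi)\in\mathbb{R}\xi$. Concretely, from \eqref{nablaXphi} one gets $\varphi(\delta\varphi)=\sum_i(\nabla_{e_i}\varphi)\big(\varphi(e_i)\big)$, and then closedness of $\d\eta$ turns $g\big(\sum_i(\nabla_{e_i}\varphi)(\varphi e_i),X\big)$ into $-g(\nabla_X\varphi,\varphi)=-\tfrac12\,X\!\cdot\!|\varphi|^2=0$, since $|\varphi|^2=2m$ is constant by \eqref{phi2}. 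That constancy of $|\varphi|^2$ is the missing ingredient in your sketch.
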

\begin{proof}  From \eqref{kostant} we get:
\begin{equation} \label{nablaxiphi} \nabla _{\xi} \varphi = 0, \end{equation}
and 
\begin{equation} \label{deltaphi} \delta \varphi = {\rm Ric} (\xi) \end{equation}
--- here $\delta \varphi$ denotes the co-differential of the endomorphism $\varphi$ and ${\rm Ric}$ is regarded as a field of endomorphisms of ${\rm T} M$ ---  
whereas, from (\ref{phi2}) we readily infer
\begin{equation} \label{nablaXphi} \nabla _X \varphi \circ \varphi + \varphi \circ \nabla _X \varphi = \frac{1}{2} X \lrcorner {\rm d} \eta \otimes \xi + \eta \otimes \varphi (X), \end{equation}
hence
\begin{equation} \label{nablaXphixi} (\nabla _X \varphi) (\xi) = {\rm R} _{\xi, X} \xi = X - \eta (X) \xi, \end{equation}
for any vector field $X$, from which we get
\begin{equation} \label{ricxixi} {\rm Ric} (\xi, \xi) = n - 1 = 2 m. \end{equation} 
In view of (\ref{ricxixi}) and (\ref{ker}), to prove Lemma
\ref{ric} it is sufficient to check  that $\varphi \big({\rm Ric} (\xi)\big) 
= 0$, or else, by (\ref{deltaphi}), that $\varphi (\delta \varphi) = 0$. In view of  (\ref{nablaxiphi}), we have 
\begin{equation} \delta \varphi = - \sum _{i = 1} ^{2 m} (\nabla _{e _i} \varphi) (e _i), \end{equation}
 for any auxiliary (local) orthonormal frame of $\mathcal{D}$; from (\ref{nablaXphi}) we thus get 
\begin{equation} \varphi (\delta \varphi) = \sum _{i = 1} ^{2 m} (\nabla _{e _i} \varphi) \big(\varphi (e _i)\big). \end{equation}
 Since $\varphi$ is associated to the {\it closed} $2$-form ${\rm d} \eta$, for any vector field $X$ we have:
\begin{equation*} g\big(\sum _{i = 1} ^{2 m} (\nabla _{e _i} \varphi) \Big(\varphi (e _i)\big), X\Big) = - \frac{1}{2} \sum _{i = 1} ^{2 m} g \big((\nabla _X\varphi) (e _i), \varphi (e _i)\big) = - g (\nabla _X \varphi, \varphi), \end{equation*}
 which is equal to zero since the norm of $\varphi$ is constant.
\end{proof}
In the following statement, we denote by $(\mathbb{S} ^{2 m + 1}, c _0)$ the $(2 m + 1)$-dimensional sphere, equipped with the standard   flat conformal structure $c _0$. 
\begin{prop} \label{prop-flat} Let $(g, \xi)$ be any  $K$-contact structure on
 $(\mathbb{S} ^{2 m + 1},c_0)$, such that $g$ belongs to the conformal class $c _0$. Then, $g$ has constant sectional curvature equal to $1$ and the $K$-contact structure is then isomorphic to the standard Sasaki-Einstein structure.
\end{prop}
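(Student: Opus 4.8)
The plan is to use the hypothesis $g\in c_0$ only through the fact that $g$ is locally conformally flat. In any dimension $n=2m+1\ge 3$ this forces the curvature of $g$ to be the Kulkarni--Nomizu product of $g$ with its Schouten tensor $P:=\tfrac{1}{n-2}\big(\mathrm{Ric}-\tfrac{\mathrm{Scal}}{2(n-1)}\,g\big)$, that is
\[
g(\mathrm{R}_{X,Y}Z,W)=P(X,Z)g(Y,W)+P(Y,W)g(X,Z)-P(X,W)g(Y,Z)-P(Y,Z)g(X,W),
\]
and it forces the Cotton tensor $C(X,Y,Z):=(\nabla_XP)(Y,Z)-(\nabla_YP)(X,Z)$ to vanish identically. (For $n\ge4$ both assertions follow from the vanishing of the Weyl tensor; for $n=3$ the first holds on any Riemannian manifold and the second is precisely local conformal flatness.) The proof then consists in combining these two facts with Lemmas \ref{Rxi} and \ref{ric}.

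First I would determine $P$. Comparing the curvature identity above with Lemma \ref{Rxi}, which gives $g(\mathrm R_{\xi,X}\xi,W)=g(X,W)-\eta(X)\eta(W)$, and evaluating on a local orthonormal frame $(\xi,e_1,\dots,e_{2m})$ with $e_i\in\mathcal D=\ker\eta$, one obtains $P(\xi,\xi)+P(e_i,e_i)=1$ and $P(e_i,e_j)=0$ for $i\ne j$. On the other hand $\mathrm{Ric}=(n-2)P+(\mathrm{tr}_gP)\,g$, so Lemma \ref{ric}, i.e.\ $\mathrm{Ric}(\xi,e_i)=0$, gives $P(\xi,e_i)=0$. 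Altogether $P=\mu\,g+\nu\,\eta\otimes\eta$ for functions $\mu,\nu$ on $M$ subject to the single pointwise relation $2\mu+\nu=1$.

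The decisive step is to substitute this into $C\equiv0$. Since $\xi$ is Killing, $(\nabla_X\eta)(Y)=g(\varphi X,Y)=\tfrac12\,\d\eta(X,Y)$, and $\varphi$ is skew-symmetric with $\varphi(\xi)=0$; using this together with $\nabla g=0$, a short computation gives, for all vector fields $X,Y$ tangent to $\mathcal D$,
\[
0=C(X,Y,\xi)=\nu\,\d\eta(X,Y),
\]
all the terms involving $\d\mu$ or $\d\nu$ disappearing because they are paired with $\eta(X)=\eta(Y)=0$. Since $\d\eta$ restricts to a non-degenerate $2$-form on $\mathcal D$ --- this is the contact condition, cf.\ \eqref{ker} --- we conclude $\nu\equiv0$, hence $\mu\equiv\tfrac12$ in view of $2\mu+\nu=1$. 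Therefore $P=\tfrac12 g$, the curvature identity becomes $g(\mathrm R_{X,Y}Z,W)=g(X,Z)g(Y,W)-g(X,W)g(Y,Z)$, and $g$ has constant sectional curvature $1$. I expect this contraction of the Cotton tensor against the contact distribution to be the crux of the argument; the rest is essentially linear algebra.

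It remains to identify the $K$-contact structure. Since $g$ has constant curvature $1$, $(\mathbb S^{2m+1},g)$ is isometric to the round unit sphere in $\mathbb R^{2m+2}$; transporting the structure by such an isometry, $\xi$ becomes a unit Killing field of the round sphere, hence the restriction of a linear field $x\mapsto Ax$ with $A\in\mathfrak{so}(2m+2)$, and then $\varphi(X)=\nabla_X\xi=AX+g(X,Ax)\,x$. Expanding the $K$-contact relation $\varphi^2=-\mathrm{Id}+\eta\otimes\xi$ and using that it must hold at every point of $\mathbb S^{2m+1}$ forces $A^2=-\mathrm{Id}$, so $A$ is an orthogonal complex structure on $\mathbb R^{2m+2}$ and $(\xi,\varphi)$ is the standard Sasakian structure attached to it; as all orthogonal complex structures on $\mathbb R^{2m+2}$ are conjugate under $\mathrm O(2m+2)$, the $K$-contact structure is isomorphic to the standard Sasaki--Einstein structure on $\mathbb S^{2m+1}$, and in particular $g$ is Einstein. (Alternatively, once $P=\tfrac12 g$ one checks directly from the Kostant formula \eqref{kostant} that $(\nabla_X\varphi)(Y)=\eta(Y)X-g(X,Y)\xi$, so that the structure is Sasakian, and then one invokes the uniqueness of the Sasakian structure on the round sphere through its flat K\"ahler cone $\mathbb C^{m+1}$.)
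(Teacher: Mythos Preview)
Your argument is correct and follows essentially the same route as the paper: both proofs use local conformal flatness to express the curvature through the Schouten tensor, use Lemmas~\ref{Rxi} and~\ref{ric} to reduce the Schouten tensor to the form $\mu\,g+\nu\,\eta\otimes\eta$, and then exploit the vanishing of the Cotton tensor in the $\xi$-direction (the paper phrases this as the symmetry of $(\nabla_XS)(Y)$ in $X,Y$) together with the antisymmetry of $\nabla\xi$ to force $\nu=0$. Your version is slightly more streamlined---you bypass the separate step showing $\mathrm{Scal}$ is constant via the contracted Bianchi identity---and you are more careful about the case $n=3$, where the paper's identity $\delta W=(n-3)\,C$ is vacuous and one must invoke the Cotton characterization of conformal flatness directly, as you do.
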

\begin{proof} Since $c _0$ is flat, the curvature ${\rm R}$ of $g$ is of the form
  \begin{equation} \label{RS} {\rm R} _{X, Y} = {\rm S} (X) \wedge Y + X \wedge {\rm S} (Y), \end{equation}
  where, in general,  for any $n$-dimensional Riemannian manifold $(M, g)$, the {\it normalized Ricci tensor} (or Schouten tensor) ${\rm S}$ is defined by
  \begin{equation} \label{S-gen} {\rm S} = \frac{1}{(n - 2)} \left({\rm Ric}
- \frac{\rm Scal}{2 (n - 1)}{\rm Id}\right). \end{equation}
   It then follows from (\ref{R}), (\ref{blair}),  and (\ref{RS})  that  
 \begin{equation} \label{S} {\rm S} (X) = \frac{1}{(n - 2)} \, \left[\left(\frac{\rm Scal}{2 (n - 1)} - 1\right) \, X + \left(n - \frac{\rm Scal}{(n - 1)}\right) \, g (\xi, X) \, \xi\right] \end{equation}
with $n = 2 m + 1$ (as in (\ref{blair}), in (\ref{S}) and in the sequel of the proof, ${\rm Ric}$ and ${\rm S}$ are regarded as endomorphisms of the tangent bundle via the metric $g$).  In terms of the normalized Ricci tensor ${\rm S}$, the contracted Bianchi identity $\delta {\rm Ric} + \frac{\rm d\, Scal}{2} = 0$, reads
\begin{equation} \label{bianchi} \delta {\rm S} + \frac{\rm d\, Scal}{2 (n - 1)} = 0. \end{equation} By using (\ref{bianchi}), we readily infer from (\ref{S}) that ${\rm Scal}$ is constant, so that
\begin{equation} \label{CY} (\nabla _X {\rm S}) (Y) = \kappa  \, (g (\nabla _X \xi, Y) \xi + g (\xi, Y) \, \nabla _X \xi), \end{equation}
for any vector fields $X, Y$, by setting:
\begin{equation} \label{kappa} \kappa := \frac{1}{(n - 2)} \left(n - \frac{\rm Scal}{(n - 1)}\right). \end{equation}
Since the conformal structure is flat, the
general Bianchi identity (cf. e.g. \cite{der})
\begin{equation} \label{bianchi-gen} \delta {\rm W} _{Z} (X, Y) = (n-3) \, g \big(Z, (\nabla _X {\rm S}) (Y) - (\nabla _Y {\rm S}) (X)\big), \end{equation}
where ${\rm W}$ denotes the Weyl tensor of $g$, implies that $(\nabla _X {\rm S}) (Y)$ is {\it symmetric} in $X, Y$, while, by (\ref{CY}), $g \big((\nabla _X {\rm S}) Y, \xi) = \kappa \, g (\nabla _X \xi, Y)$, which is anti-symmetric, as $\xi$ is Killing;  we thus get $\kappa = 0$, hence by \eqref{kappa}, ${\rm Scal} = n (n - 1)$. By (\ref{S}), this implies ${\rm S} = \frac{1}{2} \mathrm{Id}$, so \eqref{RS} shows that $g$ is a metric of constant sectional curvature equal to $1$.  

Finally, \eqref{kostant} shows that $\nabla_X\varphi=\xi\wedge X$ for every tangent vector $X$, meaning that the K-contact structure is Sasaki-Einstein, and it is well known that the isometry group of $\mathbb{S} ^{2 m + 1}$ acts transitively on the set of Sasaki-Einstein structures on the sphere.
\end{proof} 

\begin{defi}
A Weyl connection on a conformal manifold $(M,c)$ is a torsion-free linear connection $D$ which preserves  the conformal class $c$.
\end{defi}
The latter condition means that for any metric $g$ in the conformal class $c$, there exists a real $1$-form, $\theta ^g$, called the {\it Lee form} of $D$ with respect to $g$, such that $D g = - 2 \theta ^g \otimes g$, and $D$ is then related to the Levi-Civita connection, $\nabla ^g$, of $g$ by
\begin{equation} D _X Y = \nabla ^g _X Y + \theta ^g (X) Y + \theta ^g (Y) X - g (X, Y) \, \left(\theta ^g\right) ^{\sharp _g}, \end{equation}
cf. e.g. \cite{c-p}.
A Weyl connection $D$ is said to be {\it closed} if it is locally the Levi-Civita connection of a (local) metric in $c$, {\it exact} if it is  the Levi-Civita connection of a (globally defined) metric in $c$; equivalently, $D$ is closed, respectively exact,  if its Lee form is closed, respectively exact, with respect to one, hence any, metric in $c$. 

If $M$ is compact, for any Weyl connection on $(M, c)$ there exists a distinguished metric, say $g _0$,  in $c$, usually called the {\it Gauduchon metric} 
of $D$, unique up to scaling, whose Lee form $\theta ^{g _0}$ is co-closed with respect to $g _0$,  \cite{g1}. If $D$ is closed, $\theta ^{g_0}$ is then $g _0$-harmonic, identically zero if $D$ is exact.   

The {\it Ricci tensor}, ${\rm Ric} ^D$,  of a Weyl connection $D$ is the bilinear defined by ${\rm Ric} (X, Y) = {\rm trace} \{Z \mapsto {\rm R} ^D _{X, Z} Y\} = \sum _{i = 1} ^n g ({\rm R} ^D _{X, e _i} Y, e _i)$, for any metric $g$ in $c$ and any $g$-orthonormal basis $\{e _i\} _{i = 1} ^n$.
The Ricci tensor ${\rm Ric} ^D$ defined that way is symmetric if and only if $D$ is closed.

A Weyl connection $D$ is called {\it Weyl-Einstein} if the trace-free component of the symmetric part of ${\rm Ric} ^D$ is identically zero. A closed Weyl-Einstein  connection is locally the Levi-Civita connection of a (local) Einstein metric in $c$; an exact Weyl-Einstein connection is the Levi-Civita connection of a (globally defined) Einstein metric. 

We here recall the following well-known fact, first observed in \cite{tod}, cf. also \cite{g2}. 
\begin{thm} \label{pg} 
Let $D$ be a Weyl-Einstein connection defined  on a compact connected oriented conformal manifold $(M,c)$ and denote by  $g_0$ its Gauduchon metric. Then the vector field $T$ on $M$ dual to the Lee form $\theta^{g_0}$ is Killing with respect to $g_0$. If $D$ is closed,  $T$ is parallel with respect to $g _0$, identically zero if and only if $D$ is exact, and $D$ is then the Levi-Civita connection of $g _0$. 
\end{thm}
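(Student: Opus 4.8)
The plan is to carry out the whole argument in the \emph{Gauduchon gauge}, i.e.\ with respect to the metric $g_0$ and its Lee form $\theta:=\theta^{g_0}$, writing $\nabla:=\nabla^{g_0}$, $T:=\theta^{\sharp}$, and letting $\Delta,\delta$ denote the $g_0$-Laplacian and $g_0$-codifferential. By the defining property of the Gauduchon metric we have $\delta\theta=0$, which just says that $T$ is $g_0$-divergence-free; this is the only place compactness of $M$ is used (to guarantee the existence of $g_0$), apart from the integration by parts below.

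First I would expand the Ricci tensor $\mathrm{Ric}^D$ of $D$ in terms of $\nabla$, using $D_XY=\nabla_XY+\theta(X)Y+\theta(Y)X-g_0(X,Y)\,T$; this gives $\mathrm{Ric}^D$ in terms of $\mathrm{Ric}^{g_0}$, $\nabla\theta$, $\theta\otimes\theta$, $|\theta|^2g_0$ and $\d\theta$. Its skew-symmetric part is a nonzero constant multiple of $\d\theta$, so $\mathrm{Ric}^D$ is symmetric if and only if $D$ is closed, if and only if $\d\theta=0$. Imposing the Weyl-Einstein condition that the trace-free part of the symmetric component of $\mathrm{Ric}^D$ vanishes, and reading off the $g_0$-trace separately (this only computes the proportionality function), leaves a tensorial identity which I will call $(\dagger)$, of the schematic form
\begin{equation*}
(\mathcal{L}_Tg_0)_0=\tfrac{2}{n-2}\,(\mathrm{Ric}^{g_0})_0+2\,(\theta\otimes\theta)_0,
\end{equation*}
the subscript $0$ denoting trace-free parts (the precise constants and signs depend on the sign conventions for the Lee form, but this is the shape of it).

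To prove that $T$ is $g_0$-Killing, i.e.\ that $(\mathcal{L}_Tg_0)_0=0$, I would combine $(\dagger)$ with the Bochner identity \eqref{bochner-univ} applied to $\eta=\theta$, which, since $\delta\theta=0$, reads $\mathrm{Ric}^{g_0}(T)=-\tfrac12\delta(\mathcal{L}_Tg_0)_0+\tfrac12\delta\d\theta$. Taking the $g_0$-divergence of $(\dagger)$ and feeding in the contracted Bianchi identity $\delta\,\mathrm{Ric}^{g_0}=-\tfrac12\d\,\mathrm{Scal}^{g_0}$ expresses $\delta(\mathcal{L}_Tg_0)_0$, and hence $\mathrm{Ric}^{g_0}(T)$, in terms of $\d\,\mathrm{Scal}^{g_0}$, of the first-order quantities $\nabla_T\theta$, $\d|\theta|^2$, $T\lrcorner\d\theta$, and of $\delta\d\theta$ only; on the other hand, evaluating the Weyl-Einstein equation itself on $T$ yields a second expression for $\mathrm{Ric}^{g_0}(T)$. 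Equating the two, pairing the resulting first-order identity with $\theta$, integrating over the compact manifold $M$ and integrating by parts (using $\delta\theta=0$ once more), should collapse everything to an identity of the form $\int_M\bigl(a\,|(\mathcal{L}_Tg_0)_0|^2+b\,|\d\theta|^2\bigr)\,v_{g_0}=0$ with $a>0$, forcing $(\mathcal{L}_Tg_0)_0=0$. I expect the genuine difficulty to lie exactly here: keeping track of all the constants and signs so that this integrated Weitzenböck-type identity comes out with a definite sign. An alternative, possibly cleaner, route is to insert the Weyl-Bianchi identity for $\mathrm{Ric}^D$ directly into a single integration by parts in the Gauduchon gauge, but the core of the matter is the same.

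It then remains to treat the closed case. If $D$ is closed then $\d\theta=0$, so $\theta$ is $g_0$-harmonic and $\nabla\theta$ is symmetric; since $T$ is Killing, $\nabla\theta$ is also skew-symmetric, hence $\nabla\theta=0$ and $T$ is $g_0$-parallel. Trivially $T\equiv0$ if and only if $\theta\equiv0$; and, $\theta$ being a harmonic $1$-form on the compact manifold $M$, it represents a de Rham cohomology class and so is exact if and only if it vanishes, which is precisely the condition that $D$ be exact. Finally, when $\theta\equiv0$ the relation between $D$ and $\nabla$ degenerates to $D=\nabla^{g_0}$, completing the proof.
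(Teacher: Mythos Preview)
The paper does not prove this theorem: it is quoted as a known fact with references to Tod and to Gauduchon, so there is no proof in the paper to compare against. Your outline is in the spirit of those references and is essentially correct; the derivation of $(\dagger)$ and your treatment of the closed case (harmonic $\theta$ with symmetric $\nabla\theta$, plus Killing, forces $\nabla\theta=0$) are fine.

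Where you overcomplicate matters is the integration step. You do not need the Bochner identity, the divergence of $(\dagger)$, or a second expression for $\mathrm{Ric}^{g_0}(T)$, and no $|\d\theta|^2$ term appears. Since $\delta\theta=0$, the tensor $\mathcal{L}_Tg_0$ is already trace-free, so simply pair $(\dagger)$ with $\mathcal{L}_Tg_0$ and integrate. For the Ricci term, integration by parts and the contracted Bianchi identity give
\[
\int_M\langle(\mathrm{Ric}^{g_0})_0,\mathcal{L}_Tg_0\rangle\,v_{g_0}
=\int_M\langle\mathrm{Ric}^{g_0},\mathcal{L}_Tg_0\rangle\,v_{g_0}
=2\int_M\langle\delta\,\mathrm{Ric}^{g_0},\theta\rangle\,v_{g_0}
=-\int_M\langle\d\,\mathrm{Scal}^{g_0},\theta\rangle\,v_{g_0}=0,
\]
the last equality by $\delta\theta=0$. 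For the quadratic term,
\[
\int_M\langle(\theta\otimes\theta)_0,\mathcal{L}_Tg_0\rangle\,v_{g_0}
=\int_M\langle\theta\otimes\theta,\mathcal{L}_Tg_0\rangle\,v_{g_0}
=\int_M T(|\theta|^2)\,v_{g_0}=0,
\]
again by $\delta\theta=0$. Hence $\int_M|(\mathcal{L}_Tg_0)_0|^2\,v_{g_0}=0$ directly, and $T$ is Killing. This works regardless of the precise constants in $(\dagger)$, so the sign bookkeeping you were worried about is not actually an obstacle.
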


The aim of this section is to prove the following:

\begin{thm} \label{main} Let $(M ,g,\xi)$ be a compact K-contact manifold of dimension $n = 2 m + 1$, $m \geq 1$, carrying a closed Weyl-Einstein structure $D$ compatible with the conformal class $c = [g]$. Then $g$ is Einstein and $D$ is the Levi-Civita connection of an Einstein metric $g_0$ in $c$, which is equal to $g$, up to scaling, except if $(M, c)$ is the flat conformal sphere
  $(\mathbb{S}^{2m+1}, c _0)$; in the latter case,  the $K$-contact structure is isomorphic  to the standard Sasaki-Einstein structure of $\mathbb{S} ^{2 m + 1}$.  
\end{thm}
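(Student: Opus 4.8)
The plan is to combine Theorem \ref{pg} with the results of Section \ref{sconf} and the pointwise identities satisfied by a K-contact metric. Write $g_{0}$ for the Gauduchon metric of $D$, $\theta:=\theta^{g_{0}}$ for its Lee form, $T:=\theta^{\sharp_{g_{0}}}$, and set $g=e^{2f}g_{0}$, $Z:=\mathrm{grad}_{g_{0}}f$, so that $|\xi|^{2}_{g_{0}}=e^{-2f}$ and $\mathcal{D}=\mathrm{ker}\,\eta=\{X:g_{0}(\xi,X)=0\}$. Since $\xi$ is $g$-Killing it is conformal Killing for $c$, and by Theorem \ref{pg} the field $T$ is $g_{0}$-parallel. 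I would distinguish the cases $T\neq0$ ($D$ not exact) and $T=0$ ($D$ exact).

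Assume first $T\neq0$. By Proposition \ref{l1} applied to $(M,g_{0})$, $\xi$ is $g_{0}$-Killing (hence $\xi(f)=0$), $[\xi,T]=0$, and $a:=g_{0}(\xi,T)$ is constant. From $\xi\bigl(g_{0}(\xi,T)\bigr)=0$ and $\nabla^{g_{0}}_{\xi}\xi=-\tfrac12\mathrm{grad}_{g_{0}}|\xi|^{2}_{g_{0}}=e^{-2f}Z$ one gets $T(f)=0$, so $\nabla^{g_{0}}_{T}\xi=0$ and $\varphi(T)=\nabla^{g}_{T}\xi=-aZ$; if $a=0$ this vector lies in $\mathcal{D}$, and \eqref{phi2} (which makes $\varphi=\nabla^{g}\xi$ an automorphism of $\mathcal{D}$) forces $Z=0$, whence $\xi$ is $g_{0}$-parallel, contradicting \eqref{phi2}. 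So $a\neq0$. Applying $\varphi^{2}=-\mathrm{Id}+\eta\otimes\xi$ to $T$ and using the conformal relation between $\nabla^{g}$ and $\nabla^{g_{0}}$ gives $\nabla^{g_{0}}_{Z}\xi=\tfrac1a T-(e^{2f}+|Z|^{2}_{g_{0}})\xi$; pairing this with $T$ (using $\nabla^{g_{0}}_{T}\xi=0$ and the skew-symmetry of $\nabla^{g_{0}}\xi$) yields $e^{2f}+|Z|^{2}_{g_{0}}=|T|^{2}_{g_{0}}/a^{2}$, hence $T=a\,\nabla^{g_{0}}_{Z}\xi+\tfrac{|T|^{2}_{g_{0}}}{a}\xi$. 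Differentiating this, using that $T$ is parallel together with Kostant's formula \eqref{kostant-gen}, gives $a\,\mathrm{R}^{g_{0}}_{\xi,X}Z+a\,\varphi_{0}\bigl(\mathrm{Hess}^{g_{0}}f(X)\bigr)+\tfrac{|T|^{2}_{g_{0}}}{a}\varphi_{0}(X)=0$, where $\varphi_{0}:=\nabla^{g_{0}}\xi$. Evaluating at a point $p$ where $f$ is minimal—so $Z_{p}=0$, the curvature term drops, $\mathrm{Hess}^{g_{0}}f|_{p}\geq0$, $\mathrm{ker}(\varphi_{0})_{p}=\mathbb{R}\xi_{p}$ by \eqref{phi2} (as $df|_{p}=0$), and $\mathrm{Hess}^{g_{0}}f(X)\in\mathcal{D}_{p}$ for $X\in\mathcal{D}_{p}$ (since $\nabla^{g_{0}}_{\xi}Z$ vanishes at $p$)—one obtains $a\,\mathrm{Hess}^{g_{0}}f(X)+\tfrac{|T|^{2}_{g_{0}}}{a}X=0$, i.e.\ $\mathrm{Hess}^{g_{0}}f(X,X)=-\tfrac{|T|^{2}_{g_{0}}}{a^{2}}|X|^{2}_{g_{0}}<0$ for $X\in\mathcal{D}_{p}\setminus\{0\}$, contradicting $\mathrm{Hess}^{g_{0}}f|_{p}\geq0$ since $\dim\mathcal{D}=2m\geq2$. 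Hence $D$ is exact.

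Thus $D=\nabla^{g_{0}}$ and $g_{0}$ is Einstein. By Proposition \ref{prop-obata}, either $\xi$ is not $g_{0}$-Killing, in which case $(M,g_{0})$ is the round sphere up to scaling, so $(M,c)=(\mathbb{S}^{2m+1},c_{0})$ and Proposition \ref{prop-flat} finishes the proof; or $\xi$ is $g_{0}$-Killing, so $\xi(f)=0$ and $Z\in\mathcal{D}$. In this last case the conformal relation together with \eqref{phi2} gives, for $X\in\mathcal{D}$, $\varphi_{0}^{2}X=-X-df(\varphi_{0}X)\,\xi-e^{-2f}df(X)\,Z$ (with $\varphi_{0}=\nabla^{g_{0}}\xi$), which at $X=Z$ reads $\varphi_{0}^{2}Z=-(1+e^{-2f}|Z|^{2}_{g_{0}})Z$ by skew-symmetry of $\varphi_{0}$. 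On the other hand, contracting the conformal transformation law for the Ricci tensor against $\xi\otimes X$ and against $\xi\otimes\xi$ and using Lemma \ref{ric} ($\mathrm{Ric}^{g}(\xi)=2m\xi$) and the Einstein condition $\mathrm{Ric}^{g_{0}}=\tfrac{\mathrm{Scal}_{g_{0}}}{n}g_{0}$, one finds $\varphi_{0}Z=-|Z|^{2}_{g_{0}}\xi$, hence $\varphi_{0}^{2}Z=-e^{-2f}|Z|^{2}_{g_{0}}Z$. Comparing the two expressions forces $Z=0$, i.e.\ $f$ is constant; then $g$ is homothetic to $g_{0}$, hence Einstein, $D=\nabla^{g_{0}}=\nabla^{g}$, and $g_{0}$ equals $g$ up to scaling. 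Finally, a compact Einstein K-contact manifold is Sasaki-Einstein by \cite{bg}, which completes the proof in the non-sphere case.

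The crux is the non-exact case: in the exact case the two computed values of $\varphi_{0}^{2}Z$ collapse at once to $Z=0$, whereas in the non-exact case the corresponding identities turn out to be automatically compatible, so one must bring in the rigidity supplied by the parallel Lee field $T$—it is the differentiation of $T=a\nabla^{g_{0}}_{Z}\xi+\tfrac{|T|^{2}_{g_{0}}}{a}\xi$ and its evaluation at a minimum of $f$, forcing $\mathrm{Hess}^{g_{0}}f$ to be negative definite on the $2m$-dimensional contact distribution, that yields the contradiction. Some care is also needed with sign conventions in the conformal transformation law for $\mathrm{Ric}$.
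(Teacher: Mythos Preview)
Your argument is correct and complete, but it takes a different route from the paper in both halves.

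\textbf{Non-exact case.} The paper argues via contact geometry: $\eta_{0}:=g_{0}(\xi,\cdot)$ is again a contact form, and from $T\lrcorner\,\mathrm{d}\eta_{0}=0$, $\eta_{0}(T)=a$ one sees $a\neq0$ and $a^{-1}T$ is the Reeb field of $\eta_{0}$; comparing the Reeb fields of $\eta$ and $\eta_{0}$ at critical points of $f$ forces $e^{2f}$ to take the \emph{same} value at every critical point, so $f$ is constant, whence $\xi$ is parallel --- contradiction. You instead derive $T=a\,\varphi_{0}(Z)+\tfrac{|T|^{2}}{a}\xi$, differentiate, and evaluate at a minimum of $f$ to make the Hessian negative on $\mathcal{D}$. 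Both work, but notice that your own intermediate identity $e^{2f}+|Z|^{2}_{g_{0}}=|T|^{2}_{g_{0}}/a^{2}$ already contains the paper's shortcut: at any critical point of $f$ one has $Z=0$, so $e^{2f}=|T|^{2}_{g_{0}}/a^{2}$ there; applying this at a maximum and a minimum gives $f_{\min}=f_{\max}$, hence $f$ constant, with no Hessian analysis needed. (A small slip: in the subcase $a=0$, from $\varphi(T)=0$ and $\ker\varphi=\mathbb{R}\xi$ you get $T\in\mathbb{R}\xi$, and then $a=g_{0}(\xi,T)=0$ forces $T=0$ directly; the phrase ``forces $Z=0$, whence $\xi$ is $g_{0}$-parallel'' is not the right chain, though the conclusion $a\neq0$ is of course correct.)

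\textbf{Exact case.} The paper works on the $g$-side: contracting the conformal Ricci formula with $\xi$ gives $\nabla^{g}_{\xi}(\mathrm{grad}_{g}f)=h\,\xi$, while $\mathcal{L}_{\xi}(\mathrm{grad}_{g}f)=0$ yields $\nabla^{g}_{\xi}(\mathrm{grad}_{g}f)=\varphi(\mathrm{grad}_{g}f)\perp\xi$, so $\varphi(\mathrm{grad}_{g}f)=0$ and $\mathrm{grad}_{g}f\in\mathbb{R}\xi$; together with $\xi(f)=0$ this gives $f$ constant. You work on the $g_{0}$-side and compare two expressions for $\varphi_{0}^{2}Z$. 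The two arguments are cousins: yours is a bit longer but equally valid, and your formula $\varphi_{0}(Z)=-|Z|^{2}_{g_{0}}\,\xi$ is essentially the $g_{0}$-translation of the paper's $\varphi(\mathrm{grad}_{g}f)=0$.
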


\begin{proof} In view of Proposition \ref{prop-flat}, we may assume that $(M, c)$ is not isomorphic to the flat conformal sphere $(\mathbb{S} ^{2 m + 1}, c _0)$. Let $g_0:=e^{2f}g$ denote the Gauduchon metric of $D$ and let $T$ denote the $g_0$-dual of the Lee form of $D$ with respect to $g_0$. According to Theorem \ref{pg}, $T$ is $\nabla^{g_0}$-parallel. We first show that $T\equiv 0$, i.e. that the closed Weyl-Einstein connection $D$ is actually exact.  
\medskip

Assume, for a contradiction, that $T$ is non-zero. By rescaling the Gauduchon metric $g _0$ if necessary, we may assume that $g_0(T,T)=1$. Denote by $\eta$, resp. $\eta _0$, the $1$-form dual to $\xi$ with respect to $g$, resp. $g _0$. Both $\eta$ and $\eta _0$ are contact $1$-forms for the contact distribution $\mathcal{D}$, and, as already noticed,  $\xi$ is the Reeb vector field of $\eta$. According to Proposition \ref{l1}, $\xi$, which is Killing with respect to $g$, hence conformal Killing with respect to $g_0$, is actually Killing with respect to $g _0$ as well, commutes with $T$,  and the inner product $a := g _0 (\xi, T) = \eta _0 (T)$ is constant;  we then have: $\mathcal{L} _T \eta _0 = 0$, hence that $T \lrcorner {\rm d} \eta _0 = \mathcal{L} _T \eta _0 - {\rm d} \big(\eta _0 (T)\big) = - {\rm d} a = 0$. Moreover, since $\eta _0 (T) = a$ and $T \lrcorner {\rm d} \eta _0=0$,   $a$ cannot be zero --- otherwise, $\eta _0$ would not be a contact $1$-form --- and $\xi _0 := a ^{-1} \, T$ is then the Reeb vector field of $\eta _0$. Since $\eta _0 = e ^{2 f} \, \eta $, the Reeb vector fields $\xi _0$ and $\xi$ are related by
\begin{equation} \label{reebs} \xi _0 = e ^{- 2f} \, \xi + Z _f, \end{equation}
where $Z _f$ is the section of $\mathcal{D}$ defined by
\begin{equation} \label{Zf} (Z _f \lrcorner {\rm d} \eta)_{|\mathcal{D}}= 2 e ^{- 2f} \, {\rm d} f _{|\mathcal{D}}. \end{equation}
Since $M$ is compact, $f$ has critical points and for each  of them, say $x$, 
it follows from (\ref{Zf}) that $Z _f (x) = 0$, hence $\xi _0 (x) = a ^{-1} \, T (x) = e ^{ - 2 f (x)} \, \xi (x)$. Since, $g _0 (T (x), T (x)) = 1$ and $g _0 (\xi (x), T (x)) = a$ for any $x$, we infer that $e ^{ 2 f (x)} = a ^2$ for {\it any} critical point $x$ of $f$, in particular for points where $f$ takes its minimal or its maximal value. It follows that $f$ is {\it constant}, with $e ^{2 f} \equiv a ^2$, that  $g _0 = a ^2 \, g$ and $\xi = a \, T$. In particular, $\eta$ and $\eta _0 = a ^2 \, \eta$ are parallel, with respect to $g$ and $g _0$, hence closed. This contradicts the fact that they are contact $1$-forms. 

\smallskip
In view of the above, $T$ must be  identically zero. This means that $D$ is the Levi-Civita connection of the Gauduchon metric $g_0$, which is thus Einstein. Since $\xi$ with respect to $g$, hence conformal Killing with respect to $g_0=e^{2f}g$, and $(M, c)$ is not isomorphic to the flat conformal sphere $(\mathbb{S} ^{2 m + 1}, c _0)$, it follows from  Proposition \ref{prop-obata} that $\xi$ is Killing with respect to $g_0$ as well. 
We thus have ${\rm d} f (\xi) = 0$, hence 
\begin{equation} \label{perp}g(\xi,{\rm grad} _g f)=0.
\end{equation}
Let $\lambda$ denote the Einstein constant of $(M,g_0)$, so that $\mathrm{Ric}^0=\lambda g_0=e^{2f}\lambda g$.
The classical formula relating the Ricci tensors $\mathrm{Ric}$ and $\mathrm{Ric^0}$ of $g$ and $g_0$ reads (cf. \cite{besse}, p. 59):
\begin{equation}\label{ricci}
\mathrm{Ric}^0=\mathrm{Ric}-(2m-1)(\nabla^g\d f-\d f\otimes\d f)+(\Delta^g f-(2m-1)|\d f|_g^2)g.
\end{equation}

Contracting \eqref{ricci} with $\xi$ and using Proposition \ref{ric} we get
$$\lambda e^{2f}\eta=2m\eta-(2m-1)\nabla^g_\xi\d f+(\Delta^g f-(2m-1)|\d f|_g^2)\eta.$$
Taking the metric duals with respect to $g$ this equation reads
\begin{equation}\label{nabla}\nabla^g_\xi({\rm grad} _g f)=h\xi,\qquad \hbox{with}\qquad\ h:=\frac1{2m-1}\left(\Delta^g f-(2m-1)|\d f|_g^2+2m-\lambda e^{2f}\right).
\end{equation}
On the other hand, we have $0=\d\mathcal{L}_\xi f=\mathcal{L}_\xi \d f$, thus $\mathcal{L}_\xi ({\rm grad} _g f)=0$ and therefore $$\nabla^g_\xi({\rm grad} _g f)=
\nabla^g_{{\rm grad} _g f}\xi=\varphi({\rm grad} _g f).$$ Since the image of $\varphi$ is orthogonal to $\xi$, \eqref{nabla} implies that $\varphi({\rm grad} _g f)=0$, thus 
by \eqref{ker}, ${\rm grad} _g f$ is proportional to $\xi$. From \eqref{perp} we thus get ${\rm grad} _g f=0$, so $f$ is constant and $D$ is the Levi-Civita connection of $g$, and hence $g$ is Einstein.
\end{proof}

As a direct corollary of Theorem \ref{main} above together with Theorem 1.1 in \cite{adm} (see also \cite{bg}), we obtain the following result:

\begin{coro} \label{cor}
If $(M^{2m+1},g,\xi)$ is a compact K-contact manifold carrying a closed Weyl-Einstein structure compatible with $g$, then $M$ is Sasaki-Einstein.
\end{coro}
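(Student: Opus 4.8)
The plan is to deduce the corollary by combining Theorem \ref{main} with the known fact that compact Einstein $K$-contact manifolds are Sasakian. First I would apply Theorem \ref{main} to the given data $(M,g,\xi)$ and the closed Weyl-Einstein connection $D$: this yields a dichotomy. Either $(M,[g])$ is the flat conformal sphere $(\mathbb{S}^{2m+1},c_0)$, or $g$ itself is Einstein (and $D$ is, up to scaling, its Levi-Civita connection).

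In the first case there is nothing further to do: Proposition \ref{prop-flat} already asserts that a $K$-contact structure on $(\mathbb{S}^{2m+1},c_0)$ with $g\in c_0$ has constant sectional curvature $1$ and is isomorphic to the standard Sasaki-Einstein structure of the sphere, so $M$ is Sasaki-Einstein.

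In the remaining case $(M,g,\xi)$ is a compact Einstein $K$-contact manifold. Here I would simply invoke Theorem 1.1 of \cite{adm} (equivalently the earlier result of Boyer--Galicki \cite{bg}), which states that a compact Einstein $K$-contact manifold is automatically Sasakian. Since it is moreover Einstein, it is Sasaki-Einstein, which completes the proof.

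The only real content beyond bookkeeping is the implication ``compact Einstein $K$-contact $\Rightarrow$ Sasakian'', i.e. upgrading \eqref{blair} and the Einstein condition to the integrability identity \eqref{sasaki-R}; but this is precisely what \cite{adm,bg} establish, so it is not an obstacle to be overcome here. Thus the corollary follows at once from Theorem \ref{main}, Proposition \ref{prop-flat}, and the cited classification.
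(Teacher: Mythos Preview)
Your proposal is correct and matches the paper's own argument, which simply says the corollary follows directly from Theorem \ref{main} together with Theorem 1.1 of \cite{adm} (or \cite{bg}). One minor remark: in Theorem \ref{main} the conclusion ``$g$ is Einstein'' holds in \emph{all} cases, including the sphere (the ``except'' clause only concerns whether $g_0=g$ up to scaling), so your separate treatment of the sphere via Proposition \ref{prop-flat} is redundant but harmless.
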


\begin{rem}
In \cite{ma}  it is claimed  that if $(M^{2m+1},g,\xi)$ is a compact K-contact manifold carrying a compatible closed Weyl-Einstein structure, then $M$ is Sasakian if and only if it is $\eta$-Einstein. Our above result show that the 
hypotheses in \cite{ma} already imply both conditions.
\end{rem}

\end{document}